\newtheorem{theorem}{Theorem}[section]
\newtheorem{proposition}[theorem]{Proposition}
\newtheorem*{theorem*}{Theorem}
\newtheorem{remark}[theorem]{Remark}
\def\ae{\hbox{\rm a.e.{ }}}
\def\as{\hbox{\rm a.s.{ }}}
\newcommand\given[1][]{\:#1\vert\:}
\newcommand{\R}{\mathbb{R}}
\newcommand{\Ff}{\mathbb{F}}
\newcommand{\U}{\mathcal{U}}
\newcommand{\D}{\mathcal{D}}
\newcommand{\F}{\mathcal{F}}
\newcommand{\hH}{\mathcal{H}}
\newcommand{\Om}{\Omega}
\newcommand{\Prob}{\mathbb{\Prob}}
\newcommand{\mytilde}{\raise.17ex\hbox{$\scriptstyle\mathtt{\sim}$}}
\def\a{\alpha}             
\def\b{\beta}                                    
\def\s{\sigma}                    
\def\l{\lambda}
\def\k{\kappa}
\def\ms{\medskip} 
\def\no{\noindent}
\begin{document}
\title[An SMP for Markov chains of mean-field type]{A Stochastic Maximum Principle for Markov chains of mean-field type}

\author{Salah Eddine Choutri and Hamidou Tembine}

\address{Department of Mathematics \\ KTH Royal Institute of Technology \\ 100 44, Stockholm \\ Sweden}
\address{New York University, 19 Washington Square North New York, NY 10011, USA} 
\email{choutri@kth.se \quad tembine@nyu.edu}
\thanks{This research was supported by FILL IN }

\date{\today}

\subjclass[2010]{60H10, 60H07, 49N90}

\keywords{Mean-field, nonlinear Markov chain, Backward SDEs, optimal control, Stochastic maximum principle }

\begin{abstract}
We derive sufficient and necessary optimality conditions in terms of a stochastic maximum principle (SMP) for controls  associated with cost functionals of mean-field type, under a dynamics driven by a class of Markov chains of mean-field type which are pure jump processes obtained as solutions of a well-posed martingale problem.  As an illustration, we apply the result to generic examples of control problems as well as some applications.
\\

\end{abstract}

\maketitle

\tableofcontents

\section{Introduction}
The goal of this paper is to find sufficient and necessary optimality conditions in terms of a stochastic maximum principle (SMP) for a set of admissible controls $\hat{u}$, which minimize payoff functionals of the form
\begin{equation*}
J(u):=E^u\left[\int_0^T f(t,x,E^u[\k_f(x(t))],u(t))dt+ h\left(x(T),E^u[\k_h(x(T))]\right)\right],
\end{equation*}
w.r.t. admissible controls $u$, for some given functions $f, h, \k_f$ and $\k_h$, under dynamics driven by a pure jump process $x$ with state space $I=\{0,1,2,3,\ldots\}$ whose jump intensity under the probability measure $P^u$ is of the form 
$$
\l^u_{ij}(t):=\lambda_{ij}(t,x,E^u[\k(x(t))], u(t)),\,\,\quad i,j\in I,
$$
for some given functions $\l$ and $\k$, as long as the intensities are predictable. Due to the dependence of the intensities on the mean of (a function of) $x(t)$ under $P^u$,  the process $x$ is commonly called a nonlinear Markov chain or Markov chain of mean-type, although it is does not satisfy the standard Markov property as explained in the seminal paper by McKean \cite{McKean} for diffusion processes. A more general situation is when the jump intensities depend on the marginal law $P^u\circ x^{-1}(t)$ of $x(t)$ under $P^u$. To keep the content of the paper as simple as possible, we do not treat this general case. The dependence of the intensities on the whole path $x$ makes the jump process cover a large class of real-word applications.

The present work is a continuation of \cite{choutri2016} where we proved existence and uniqueness of this class of processes, in terms of a martingale problem, and derived sufficient conditions (cf. Theorem 4.6 in \cite{choutri2016}) for existence of an optimal control which minimizes $J(u)$, for a rather general class of (unbounded) jump intensities. Since the suggested conditions are rather difficult to apply in concrete situations (see Remark 4.7 and Example 4.8 in \cite{choutri2016}), we aim in this paper to investigate whether the SMP can yield optimality conditions that are tractable and easy to verify. 

While in the usual strong-type control problems, the dynamics is given in terms of a process $X^u$ which solves a stochastic differential equation (SDE) on {\it a given} probability space $(\Omega,\F,Q)$, the dynamics in our formulation is given in terms of a family of probability measures $(P^u,\,u\in\U)$ and $x$ as the coordinate process i.e. it does not change with the control $u$. This type of formulation is usually called weak-type formulation for control problems. 

The main idea in the Martingale and Dynamic Programming approaches to optimal control problems for jump processes (without mean-field coupling) suggested in previous work including the following first papers in the subject \cite{BV, Bis, DE, WD} (the list of references is far from being exhaustive), is to use the Radon-Nikodym density process $L^u$ of $P^u$ w.r.t. some reference probability measure $P$ as dynamics and recast the control problem to a standard one. In this paper we apply the same idea and recast the control problem to a mean-field-type control problem to which an SMP can applied. By a  Girsanov-type result for pure jump processes, the density process  $L^u$ is a martingale and solves a linear SDE driven by some accompanying $P$-martingale $M$. The adjoint process associated to the SMP solves a (Markov chain) backward stochastic differential equation (BSDE) driven by the $P$-martingale $M$, whose existence and uniqueness can be derived using the results by Cohen and Elliott \cite{Cohen1,Cohen2}. For some linear and quadratic cost functionals, we explicitly solve these BSDEs and derive a closed form of the optimal control. 

In Section 2, we briefly recall the basic stochastic calculus for pure jump processes we will use in the sequel. In Section 3, we derive sufficient and necessary optimality conditions for the control problem. The SMP optimality conditions are derived in terms  of a mean-field stochastic maximum principle where the adjoint equation is a Markov chain BSDE. In Section 3, we illustrate the results by two examples of optimal control problems that involve two-state chains and linear quadratic cost functionals. We also consider an optimal control of mean-field version of the Schl{\"o}gl model for chemical reactions. We consider linear and quadratic cost functionals in all examples for the sake of simplicity and  also because, in these cases, we obtain the optimal controls in closed form.

The obtained results can easily be extended to pure jump processes taking values on more general state spaces such as $I=\mathbb{Z}^d,\,d\ge 1$.


\section{Preliminaries}
Let $I:=\{0,1,2,\ldots\}$ equipped with its discrete topology and $\s$-field and let $\Om:=\D([0,T],I)$ be the space of functions from $[0,T]$ to $I$ that are right continuous with left limits at each $t\in [0,T)$ and are left continuous at time $T$. We endow $\Om$ with the Skorohod metric $d_0$ so that $(\Om,d_0)$ is a complete separable metric (i.e. Polish) space.  Given $t\in [0,T]$ and $\omega\in\Om$, put  $x(t,\omega)\equiv\omega(t)$ and denote by $\F^0_t:=\sigma(x(s),\,\, s\le t),\, 0\le t\le T,$ the filtration generated by $x$.  Denote by $\F$ the Borel $\sigma$-field over $\Om$. It is well known that $\F$ coincides with $\sigma( x(s),\,\, 0\le s\le T)$.  

To $x$ we associate the indicator process $I_i(t):=\mathbf{1}_{\{x(t)=i\}}$ whose value is $1$ if the chain is in state $i$ at time $t$ and $0$ otherwise, and the counting processes $N_{ij}(t),\,\,i\ne j$, independent of $x(0)$, such that 
$$
N_{ij}(t):=\#\{\tau\in(0,t]:x(\tau^-)=i, x(\tau)=j\},\quad N_{ij}(0)=0,
$$
which count the number of jumps from state $i$ into state $j$ during the time interval $(0,t]$. Obviously, since $x$ is right continuous with left limits, both $I_i$ and $N_{ij}$ are right continuous  with left limits. Moreover, by the relationship
\begin{equation}\label{x-rep-1}
x(t)=\sum_i iI_i(t),\quad I_i(t)=I_i(0)+\underset{j:\, j\neq i}\sum\left(N_{ji}(t)-N_{ij}(t)\right),
\end{equation}
the state process, the indicator processes, and the counting processes carry the same information which is represented by the natural filtration  $\Ff^0:=(\F^0_t,\, 0\le t\le T)$ of $x$. Note that \eqref{x-rep-1} is equivalent to the following useful representation
\begin{equation}\label{x-rep-2}
x(t)=x(0)+\sum_{i,j: \,i\neq j} (j-i) N_{ij}(t).
\end{equation}

Let $G=(g_{ij},\,i,j\in I)$, where $g_{ij}$ are constant entries,  be a $Q$-matrix:
\begin{equation}\label{G}
g_{ij}>0, \,\,\, i\neq j,\quad \underset{j:\, j\neq i}\sum g_{ij}<+\infty, \quad  g_{ii}=-\underset{j:\, j\neq i}\sum g_{ij}.
\end{equation}
By Theorem 4.7.3 in \cite{EK}, or Theorem 20.6 in \cite{RW} (for the finite state-space), given the $Q$-matrix $G$ and a probability measure $\xi$ over $I$,  there exists a unique probability measure $P$ on $(\Om,\F)$ under which the coordinate process $x$ is a time-homogeneous Markov chain with intensity matrix $G$ and starting distribution $\xi$  i.e. such that $P\circ x^{-1}(0)=\xi$. Equivalently, $P$ solves the martingale problem for $G$ with initial probability distribution $\xi$ meaning that,   
for every $f$ on $I$, the process defined by
\begin{equation}\label{f-mart-1}
M_t^f:=f(x(t))-f(x(0))-\int_{(0,t]}(Gf)(x(s))\,ds
\end{equation}
is a local martingale relative to $(\Om,\F,\Ff^0)$, where 
$$
Gf(i):=\sum_j g_{ij}f(j)=\sum_{j: \,j\neq i}g_{ij}(f(j)-f(i)),\,\,\, i\in I,
$$
and
\begin{equation}\label{G-f}
Gf(x(s))=\sum_{i,j: \,j\neq i}I_i(s)g_{ij}(f(j)-f(i)).
\end{equation}
By Lemma 21.13 in \cite{RW}, the compensated processes associated with the counting processes $N_{ij}$, defined by
 \begin{equation}\label{mart-1}
M_{ij}(t)=N_{ij}(t)-\int_{(0,t]} I_i(s^-)g_{ij}\, ds,\quad M_{ij}(0)=0,
\end{equation}
are zero mean, square integrable and mutually orthogonal $P$-martingales  whose  predictable quadratic variations are
\begin{equation}\label{mart-2}
\langle M_{ij}\rangle_t=\int_{(0,t]} I_i(s^-)g_{ij}\, ds.
\end{equation} 
Moreover, at jump times $t$, we have
\begin{equation}\label{mart-3}
\Delta M_{ij}(t)=\Delta N_{ij}(t)=I_i(t^-)I_j(t).
\end{equation}
Thus, the optional variation of $M$ 
$$
[M](t)=\sum_{0<s\le t}|\Delta M(s)|^2=\underset{0<s\le t}\sum\,\underset{i,j:\, j\neq i}\sum|\Delta M_{ij}(s)|^2
$$
is
\begin{equation}\label{optional-M}
[M](t)=\underset{0<s\le t}\sum\,\underset{i,j:\, j\neq i}\sum I_i(s^-)I_j(s).
\end{equation}
We call  $M:=\{M_{ij},\,\, i\neq j\}$ the accompanying martingale of the counting process $N:=\{N_{ij},\,\, i\neq j\}$ or of the Markov chain $x$. 

Denote by $\Ff:=(\F_t)_{0\le t\le T}$ the completion of $\Ff^0=(\F^0_t)_{t\le T}$ with the $P$-null sets of $\Omega$. Hereafter, a process from $[0,T]\times\Om$ into a measurable space is said predictable (resp. progressively measurable) if it is predictable (resp. progressively measurable) w.r.t. the predictable $\sigma$-field on $[0,T]\times \Om$ (resp. $\Ff$).

For a real-valued matrix $m:=(m_{ij},\, i,j \in I)$ indexed by $I\times I$, we let 
\begin{equation}\label{g-t}
\|m\|_g^2(t):=\underset{i,j:\, i\neq j}\sum |m_{ij}|^2g_{ij}\mathbf{1}_{\{x(t^-)=i\}}<\infty.
\end{equation}
If $m$ is time-dependent, we simply write $\|m(t)\|_g^2$.

\section{A stochastic maximum principle}

We consider controls with values in some subset $U$ of $\R^d$ and let $\U$ be the set of $\Ff$-progressively measurable processes $u=(u(t),\,0\le t\le T)$ with values in $U\subset \R^d$. $\U$ is the set of admissible controls. 

\ms\no
For $u\in\U$, let $P^u$ be the probability measure on $(\Om,\F)$ under which the coordinate process $x$ is a jump process with intensities 
\begin{equation}\label{u-lambda}
\l_{ij}^u(t):=\l_{ij}(t,x,E^u[\k(x(t)]),u(t)),\,\,\, i,j\in I,\,\, 0\le t\le T,
\end{equation}
The cost functional  associated to $P^u$ is of the form
\begin{equation}\label{J-u}
J(u):=E^u\left[\int_0^T f(t,x,E^u[\k_f(x(t))],u(t))dt+ h\left(x(T),E^u[\k_h(x(T))]\right)\right].
\end{equation}

\ms
In this section we propose to characterize minimizers $\bar u$ of $J$ i.e. $\bar u\in\U$ satisfying
\begin{equation}\label{opt-J}
J(\bar u)=\min_{u\in\U}J(u)
\end{equation}
in terms of a stochastic maximum principle (SMP). We first state and prove the sufficient optimality conditions. Then, we state the necessary optimality conditions.

\ms Let $P$  be the probability measure on $(\Omega, \mathcal F)$ under which $x$ is a time-homogeneous Markov chain such that $P\circ x^{-1}(0)=\xi$ and with $Q$-matrix $(g_{ij})_{ij}$  satisfying \eqref{G}.
Then, by a Girsanov-type result for pure jump processes (see e.g. \cite{RW, bremaud}), it holds that
 \begin{equation}\label{P-u}
 dP^u:=L^u(T) dP,
 \end{equation}
where, for $0\le t\le T$,
\begin{equation}\label{P-u-density}
 L^u(t):=\underset{\substack{i,j\\ i\neq j} }\prod \exp{\left\{ \int_{(0,t]}\ln{\frac{ \l_{ij}^u(s)}{g_{ij}}}dN_{ij}(s)-\int_0^t( \l_{ij}^u(s)-g_{ij})I_i(s)ds  \right\}}, 
\end{equation}
 which satisfies
\begin{equation}\label{L-u-1}
L^u(t)=1+\int_{(0,t]} L^u(s^-)\underset{i,j:\, i\neq j}\sum I_i(s^-)\ell^u_{ij}(s)dM_{ij}(s),
\end{equation}
where $\ell^u_{ij}(s):=\ell_{ij}(t, x,E^u[\k(x(s))],u(s))$ is given by the formula
\begin{equation}\label{L-u-2}
\ell^u_{ij}(s)=\left\{\begin{array}{rl}
\l_{ij}^u(s)/g_{ij}-1 &\text{if }\,\, i\neq j,\\ 0 & \text{if }\,\, i=j,
\end{array}
\right.
\end{equation}
and $(M_{ij})_{ij}$ is the $P$-martingale given in \eqref{mart-1}. Moreover, the accompanying martingale $M^u=(M^u_{ij})_{ij}$ satisfies
\begin{equation}\label{m-u-1}
M^u_{ij}(t)=M_{ij}(t)-\int_{(0,t]} \ell^u_{ij}(s)I_i(s^-)g_{ij}ds.
\end{equation}

\ms\no Noting that 
$$
J(u)=E\left[L^u(T)\int_0^T f(t,x,E^u[\k_{f}(x(t))],u(t))dt+ L^u(T) h(x(T),E^u[\k_{h}(x(T))])\right].
$$
Integrating by parts and taking expectation, we obtain
{\small 
\begin{equation}\label{J-u-mp-1}
J(u):=E\left[\int_0^T L^u(t)f(t,x,E[L^u(t)\k_{f}(x(t))],u(t))dt+ L^u(T)h(x(T),E[L^u(T)\k_{h}(x(T))])\right].
\end{equation}
}

\ms\no We have recast our problem of controlling a Markov chain through its intensity matrix to a standard control problem which aims at minimizing the cost functional \eqref{J-u-mp-1} under the dynamics given by the density process $L^u$ which satisfies \eqref{L-u-1}, to which the mean-field stochastic maximum principle in \cite{ Buckdahn2} can be applied.  The corresponding optimal dynamics is given by the probability measure $\bar{P}$ on $(\Om,\F)$ defined by
\begin{equation}\label{opt-P}
d\bar{P}=L^{\bar u}(T)dP,
\end{equation}
where $L^{\bar u}$ is the associated density process. $(L^{\bar u}, \bar{u})$ is called optimal pair associated with \eqref{opt-J}.   

\ms\no
For $w=y,\bar y,u$, $\psi_{w}$ denotes the partial derivative of the function $\psi(y,\bar y,u)$ w.r.t.  $w$.

\ms\no for $\a=\ell, f,h$, we set 
$$
\a(t):=\a(t,E[L^{u}(t)\k_{\a}(x(t))],u(t)),\,\,\, \bar\a(t):=\a(t,E[L^{\bar u}(t)\k_{\a}(x(t))],\bar u(t)).
$$

\ms To the admissible pair of processes $(L^{\bar u},\bar u)$ we associate the solution $(p,q)$ (if it exits) of the following linear BSDE of mean-field type, known as first-order adjoint equation:
\begin{equation}\label{mp-pq-adjoint}
\left\{ \begin{array}{lll}
dp(t)=-\left\{\langle \bar\ell(t),q(t)\rangle_{g}- \bar f(t)+\k(x(t))E[L^{\bar u}(t)(\langle\bar\ell_{\bar y}(t),q(t)\rangle_{g}]\right. \\ \left. \qquad\qquad\qquad\qquad -\k_{f}(x(t))E[L^{\bar u}(t)\bar f_{\bar y}(t)\right\}dt +q(t)dM(t),\\ \\ p(T)=-\bar h(T)-\k_{h}(x(T))E[L^{\bar u}(T)\bar h_{\bar y}(T)].
\end{array}
\right.
\end{equation}

\ms In the next proposition we give sufficient conditions on $f, h, \ell, \k, \k_{f}$ and $\k_{h}$ that guarantee existence of a uniques solution to the BSDE \eqref{mp-pq-adjoint}.  
\begin{proposition}\label{mp-pq}
Assume that
\begin{enumerate}
\item[(A1)] The function $\ell$ is differentiable in $\bar y$. Moreover,  $\|\ell\|_g$ and $\|\ell_{\bar y}\|_g$ are bounded. 
\item [(A2)] The functions $f, h,\k_f$ and $\k_{h}$ are bounded.  $f$ and $h$ are differentiable in $\bar y$ with bounded derivatives.
\end{enumerate}
Then, the BSDE \eqref{mp-pq-adjoint}  admits a solution $(p,q)$ consisting of an adapted process $p$ which is right-continuous with left limits and a predictable process $q$ which satisfy  
\begin{equation}\label{mp-pq-estim-1}
E\left[\sup_{t\in[0,T]}|p(t)|+\int_{(0,T]} \|q(s)\|^2_{g}ds\right]<+\infty.
\end{equation}  
This solution is unique up to indistinguishability for $p$ and equality  
$dP\times g_{ij}I_i(s^-)ds$-almost everywhere for $q$. 
\end{proposition}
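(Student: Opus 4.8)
The plan is to read \eqref{mp-pq-adjoint} as a linear Markov-chain BSDE of mean-field type driven by the accompanying martingale $M$, and to obtain existence and uniqueness from the theory of Cohen and Elliott \cite{Cohen1,Cohen2} once the terminal datum and the driver are verified to meet the usual integrability and Lipschitz requirements. In integral form the equation reads
\[
p(t)=\xi+\int_t^T F(s,q(s))\,ds-\int_{(t,T]}q(s)\,dM(s),\qquad \xi:=-\bar h(T)-\k_h(x(T))E[L^{\bar u}(T)\bar h_{\bar y}(T)],
\]
with driver
\[
F(s,q)=\langle\bar\ell(s),q\rangle_g+\k(x(s))\,E\!\left[L^{\bar u}(s)\langle\bar\ell_{\bar y}(s),q\rangle_g\right]-\bar f(s)-\k_f(x(s))\,E\!\left[L^{\bar u}(s)\bar f_{\bar y}(s)\right].
\]
Since $F$ is affine in $q$ and does not involve $p$, the equation is genuinely linear, a feature I will exploit throughout.

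First I would verify the integrability of the data. As $L^{\bar u}$ is a $P$-martingale with $E[L^{\bar u}(t)]=1$, assumption (A2) at once shows that $\xi$ is bounded, hence in $L^2(P)$, and that the inhomogeneous part $-\bar f(s)-\k_f(x(s))E[L^{\bar u}(s)\bar f_{\bar y}(s)]$ is bounded, hence square-integrable in $(s,\omega)$. For the homogeneous part, the Cauchy--Schwarz inequality for the bilinear form $\langle\cdot,\cdot\rangle_g$ together with (A1) gives $|\langle\bar\ell(s),q\rangle_g|\le\|\bar\ell(s)\|_g\,\|q\|_g\le C\|q\|_g$, so this term is Lipschitz in $q$ for the norm $\|\cdot\|_g$ of \eqref{g-t}, which is precisely the norm controlling the $M$-integral through \eqref{mart-2}. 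This places the non-mean-field part of the equation inside the Cohen--Elliott framework.

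The principal obstacle is the mean-field term $\k(x(s))E[L^{\bar u}(s)\langle\bar\ell_{\bar y}(s),q\rangle_g]$, whose Lipschitz estimate involves an \emph{expectation} of $\|q\|_g$ rather than its pathwise value, so that \eqref{mp-pq-adjoint} is not covered verbatim by \cite{Cohen1,Cohen2}. I would handle it by a Picard fixed-point scheme that relies on the linearity: freezing the mean-field coupling at a given predictable $Q$ turns \eqref{mp-pq-adjoint} into a standard linear Markov-chain BSDE whose source is square-integrable (using here square-integrability of $L^{\bar u}$ and of $\k(x(\cdot))$, e.g.\ when $\k$ is bounded), so Cohen--Elliott produce a unique solution $(p,q)$ in the space of \eqref{mp-pq-estim-1} and thereby a map $Q\mapsto q$. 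Writing $\phi(s):=E[L^{\bar u}(s)\langle\bar\ell_{\bar y}(s),Q(s)\rangle_g]$ and using $E[L^{\bar u}(s)]=1$ with Cauchy--Schwarz gives $|\phi(s)|\le C\,(E[L^{\bar u}(s)\|Q(s)\|_g^2])^{1/2}$; hence the difference of two frozen solutions is dominated in $\mathcal S^2\times\mathcal H^2_g$ by the corresponding distance between the $Q$'s, and inserting an exponential weight $e^{\beta s}$ (equivalently, iterating and invoking Gronwall's inequality) turns the map into a contraction. Its fixed point is the desired $(p,q)$, and the estimate \eqref{mp-pq-estim-1} is the standard linear a priori bound.

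Finally, uniqueness in the stated senses follows from the usual energy argument: for two solutions set $\delta p=p-p'$ and $\delta q=q-q'$, apply the product formula to $|\delta p|^2$ along the $M$-dynamics, take $P$-expectation so that the martingale term drops out, and use \eqref{mart-2} to identify the $\|\delta q\|_g^2$ contribution; Gronwall's lemma, absorbing the mean-field term exactly as above, then forces $\delta q=0$ in the $\|\cdot\|_g$-sense and $\delta p\equiv0$. The measure-theoretic identification is natural because, by \eqref{g-t}, only the entries $q_{ij}(s)$ on $\{x(s^-)=i\}$ enter both the dynamics and the norm $\|q\|_g$, whence $q$ is unique $dP\times g_{ij}I_i(s^-)\,ds$-almost everywhere and $p$ up to indistinguishability.
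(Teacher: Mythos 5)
Your proposal is correct and follows essentially the same route as the paper: the paper's (sketched) proof likewise invokes the mean-field contraction argument of Buckdahn--Li--Peng, carried out in the exponentially weighted norm $\|(p,q)\|^2_{\beta}:=E\int_0^T e^{\beta t}\bigl(|p(t)|^2+\|q(t)\|^2_g\bigr)\,dt$ together with It\^o's formula for purely discontinuous semimartingales, which is exactly your energy/Gronwall step. Your Picard scheme --- freezing the mean-field coupling, solving the frozen linear equation via Cohen--Elliott, and contracting in the weighted norm --- is precisely how that argument is implemented in the Markov-chain setting, so the two proofs coincide in substance.
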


\begin{proof} Assumptions (A1) and (A2) make the driver of the BSDE \eqref{mp-pq-adjoint}  Lipschitz continuous in $q$. The proof is similar to that of Theorem 3.1 for the Brownian motion driven mean-field BSDE derived in  \cite{Buckdahn} by considering the following norm 
$$
\|(p,q)\|^2_{\beta}:=E\int_0^T e^{\beta t}(|p(t)|^2+\|q(t)\|^2_g)dt,
$$
where $\beta >0$, along with It\^o's formula for purely discontinuous semimartingales. We omit the details.
\end{proof}

\begin{remark} 
\begin{itemize}
\item [(i)] The boundedness on $f$ and $h$ and their derivatives  is strong and can be considerably weakened using standard truncation techniques. 

\item [(ii)] If $\ell_{\bar y}= 0$ i.e. the intensity does not contain any mean-field coupling, the BSDE \eqref{mp-pq-adjoint}   becomes standard. Thanks to Theorem 3.10 in \cite{choutri2016}, it is solvable only by imposing similar conditions to (H1)-(H3) therein.

\item [(iii)] If $\ell_{\bar y}\neq 0$ i.e. the intensity is of mean-field type,  we don't know whether we can relax the imposed boundedness of $\ell, \k$ and $\ell_{\bar y}$ because without this condition the standard comparison theorem  for Markov chain BSDEs simply does not, in general, apply for such drivers. 
\end{itemize}
\end{remark}

Let $(L^{\bar u},\bar u)$ be an admissible pair and $(p,q)$ be the associated first order adjoint process solution of \eqref{mp-pq-adjoint}. 
 
\ms For $v\in U$, we introduce the Hamiltonian associated to our control problem 
\begin{equation}\label{ham-u}
H(t,v):=L^{\bar u}(t)\left(\langle\ell(t,E[L^{\bar u}(t)\k(x(t))],v),q(t)\rangle_{g}-f(t,E[L^{\bar u}(t)\k_{f}(x(t))],v)\right).
\end{equation}

Next, we state the SMP sufficient and necessary optimality conditions, but only prove the sufficient optimality case, as the necessary optimality  conditions result is tedious and more involved but by now 'standard' and can be derived following the same steps of \cite{ Buckdahn2, Shen, Tang}.  

\ms
In the next two theorems we assume that (A1) and (A2) of Proposition \eqref{mp-pq} hold.

\begin{theorem}[Sufficient optimality conditions]\label{sufficient-mp}
 Let $(L^{\bar u},\bar u)$ be an admissible pair and $(p,q)$ be the associated first order adjoint process which satisfies \eqref{mp-pq-adjoint}-\eqref{mp-pq-estim-1}.  Assume
 \begin{itemize}
\item[(A4)] The set of controls $U$ is a convex body (i.e. $U$ is convex and has a nonempty interior) of $\R^d$, and the functions $\ell$ and $f$ are  differentiable in $u$.
\item[(A5)]  The functions $(y,\bar y,u)\mapsto y\ell(\cdot,\bar y,u)$ and $(y,\bar y,u)\mapsto -yf(\cdot,\bar y,u)$ are concave in $(y,\bar y,u)$  for  $\ae\, t\in [0,T]$, $P$-almost surely,
\item[(A6)] The function $(y,\bar y)\mapsto yh(\cdot,\bar y)$ is convex .
\end{itemize}
If the admissible control $\bar u$ satisfies
\begin{equation}\label{H-max}
H(t,\bar u(t))=\max_{v\in U} H(t,v), \qquad \ae t\in[0,T],\quad  P\text{-}\as
\end{equation}
then, the pair $(L^{\bar u},\bar u)$ is optimal.
\end{theorem}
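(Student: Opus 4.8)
The plan is to fix an arbitrary admissible $u\in\U$ with density $L^u$, set $\wt L:=L^u-L^{\bar u}$ (so that $\wt L(0)=0$ since both densities start at $1$), and prove $J(u)-J(\bar u)\ge 0$ directly from \eqref{J-u-mp-1}. Splitting the cost into its running and terminal parts,
\begin{equation*}
J(u)-J(\bar u)=E\Big[\int_0^T\big(L^u(t)f(t)-L^{\bar u}(t)\bar f(t)\big)\,dt\Big]+E\big[L^u(T)h(T)-L^{\bar u}(T)\bar h(T)\big].
\end{equation*}
For the terminal term I would use the convexity of $(y,\bar y)\mapsto yh$ from (A6): bounding the increment below by its supporting hyperplane at $(L^{\bar u}(T),E[L^{\bar u}(T)\k_h(x(T))])$ produces a term $\bar h(T)\wt L(T)$ together with a mean-field term $L^{\bar u}(T)\bar h_{\bar y}(T)\big(E[L^u(T)\k_h(x(T))]-E[L^{\bar u}(T)\k_h(x(T))]\big)$. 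Since the mean-field increment is \emph{deterministic} and equals $E[\k_h(x(T))\wt L(T)]$, an interchange of expectations converts this into $E\big[\k_h(x(T))E[L^{\bar u}(T)\bar h_{\bar y}(T)]\,\wt L(T)\big]$. Adding the two contributions and comparing with the terminal condition in \eqref{mp-pq-adjoint} yields exactly $E[L^u(T)h(T)-L^{\bar u}(T)\bar h(T)]\ge E[-p(T)\wt L(T)]$.

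Next I would compute $E[-p(T)\wt L(T)]$ by It\^o's product formula for the purely discontinuous semimartingales $p$ and $\wt L$,
\begin{equation*}
p(T)\wt L(T)=\int_{(0,T]}p(s^-)\,d\wt L(s)+\int_{(0,T]}\wt L(s^-)\,dp(s)+[p,\wt L]_T,
\end{equation*}
using $\wt L(0)=0$. Taking expectations kills the two stochastic integrals against $M$ (under the integrability $E\int_0^T\|q\|_g^2<\infty$ from Proposition \ref{mp-pq} together with the moment bounds on $L^u$ afforded by (A1)); the drift of $dp$ contributes the BSDE driver tested against $\wt L$, and the covariation term has compensator $\int_0^T\langle q(t),L^u(t)\ell^u(t)-L^{\bar u}(t)\bar\ell(t)\rangle_g\,dt$ by \eqref{mart-2}--\eqref{mart-3} and the orthogonality of the $M_{ij}$. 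This expresses $E[-p(T)\wt L(T)]$ as $E\int_0^T D(t)\wt L(t)\,dt-E\int_0^T\langle q,L^u\ell^u-L^{\bar u}\bar\ell\rangle_g\,dt$, where $D$ denotes the driver of \eqref{mp-pq-adjoint}.

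The final step is an algebraic regrouping followed by the maximum condition. Writing the running integrand through the reduced Hamiltonian $\hH(t,y,\bar y,v):=y\langle\ell(t,\bar y,v),q(t)\rangle_g-yf(t,\bar y,v)$ (with the understanding that $\ell$ and $f$ carry their own mean-field arguments $E[L^{\bar u}\k(x)]$ and $E[L^{\bar u}\k_f(x)]$, and $H(t,v)=\hH(t,L^{\bar u}(t),\cdot,v)$), the identity $L^uf-L^{\bar u}\bar f-\langle q,L^u\ell^u-L^{\bar u}\bar\ell\rangle_g=-(\hH^u-\hH^{\bar u})$ turns the lower bound into $J(u)-J(\bar u)\ge E\int_0^T\big(D\,\wt L-(\hH^u-\hH^{\bar u})\big)\,dt$. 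I would then invoke the concavity of $(y,\bar y,u)\mapsto y\ell$ and $-yf$ from (A5) to bound $\hH^u-\hH^{\bar u}$ above by its gradient at the optimal point: the $y$-gradient gives $(\langle\bar\ell,q\rangle_g-\bar f)\wt L$, the two mean-field gradients, after the same expectation-interchange as in the terminal step, reproduce precisely $\k(x)E[L^{\bar u}\langle\bar\ell_{\bar y},q\rangle_g]\wt L$ and $-\k_f(x)E[L^{\bar u}\bar f_{\bar y}]\wt L$, and the $v$-gradient gives $H_v(t,\bar u)(u-\bar u)$. The first three pieces assemble to exactly $D\,\wt L$ and cancel, leaving $J(u)-J(\bar u)\ge -E\int_0^T H_v(t,\bar u(t))(u(t)-\bar u(t))\,dt$. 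Finally, since $U$ is a convex body and $H(t,\cdot)$ is differentiable with $\bar u(t)$ maximizing it by \eqref{H-max}, the first-order condition $\langle H_v(t,\bar u(t)),v-\bar u(t)\rangle\le 0$ for all $v\in U$, applied to $v=u(t)$, makes the right-hand side nonnegative, establishing optimality.

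The main obstacle is the bookkeeping in this last regrouping: one must verify that the mean-field gradient terms generated by (A5)--(A6), after the expectation-interchange, match \emph{exactly} the nonlocal pieces $\k(x)E[L^{\bar u}\langle\bar\ell_{\bar y},q\rangle_g]$ and $\k_f(x)E[L^{\bar u}\bar f_{\bar y}]$ hard-wired into the driver of \eqref{mp-pq-adjoint} — this cancellation is the very reason those terms appear in the adjoint equation. A secondary technical point is the integrability required to discard the martingale parts after taking expectations, and the transfer of the concavity of the matrix-valued map $y\ell$ through the weighted pairing $\langle\cdot,q(t)\rangle_g$; here the boundedness in (A1)--(A2) and the estimate \eqref{mp-pq-estim-1} are what make the relevant stochastic integrals true martingales and legitimize the gradient inequalities.
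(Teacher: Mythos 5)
Your proposal is correct and follows essentially the same route as the paper's proof: the same splitting of $J(u)-J(\bar u)$ into running and terminal parts, the convexity assumption (A6) to bound the terminal increment below by $-E[p(T)(L^u(T)-L^{\bar u}(T))]$ via the adjoint terminal condition, the It\^o product formula whose covariation term produces $\langle L^u\ell-L^{\bar u}\bar\ell,q\rangle_g$, and the concavity of the Hamiltonian together with the expectation-interchange that makes the mean-field gradient terms cancel against the driver of \eqref{mp-pq-adjoint}. The only (harmless and in fact slightly more careful) deviation is at the final step: where the paper asserts $H_u(t,\bar u(t))=0$, you invoke the variational inequality $H_v(t,\bar u(t))\cdot(v-\bar u(t))\le 0$ for all $v\in U$, which is the correct first-order condition when $\bar u(t)$ may lie on the boundary of the convex body $U$, and it suffices for the conclusion.
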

\begin{proof}  
We want to show that if the pair $(L^{\bar u},\bar u)$ satisfies \eqref{H-max}, then
$$
J(u)-J(\bar u)=E\left[ \int_0^T (L^{u}(t)f(t)-L^{\bar u}(t)\bar f(t))dt+L^{u}(T)h(T)-L^{\bar u}(T)\bar h(T)\right]\ge 0.
$$
Since  $(y,\bar y)\mapsto yh(\cdot,\bar y)$  is  convex, we have
$$
\begin{array}{lll}
E[L^{u}(T)h(T)-L^{\bar u}(T)\bar h(T)]\ge E[( \bar h(T)+\k(T)E[L^{\bar u}(T)\bar h_{\bar y}(T)])(L^{u}(T)-L^{\bar u}(T))]\\ \qquad\qquad\qquad\qquad\qquad\quad =-E[p(T)(L^{u}(T)-L^{\bar u}(T))]. 
\end{array}
$$
Integrating by parts, using \eqref{mp-pq-adjoint}, we obtain
$$\begin{array}{lll}
E[p(T)(L^{u}(T)-L^{\bar u}(T))]=E\left[\int_0^T (L^{u}(t^-)-L^{\bar u}(t^-))dp(t)+p(t^-) d(L^{u}(t)-L^{\bar u}(t)) \right.\\ \left. \qquad\qquad\qquad\qquad\qquad\qquad\qquad +d[L^{u}-L^{\bar u},p](t)\right]\\ \qquad\qquad\qquad\qquad\qquad
=-E\left[\int_0^T \left(\left\{\langle \bar\ell(t),q(t)\rangle_{g}- \bar f(t)+\k(x(t))E[L^{\bar u}(t)(\langle\bar\ell_{\bar y}(t),q(t)\rangle_{g}] \right.\right. \right.\\ \left. \left. \left. \qquad\qquad\qquad\quad -\k_{f}(x(t))E[L^{\bar u}(t)\bar f_{\bar y}(t)\right\}(L^{u}(t)-L^{\bar u}(t))-\langle L^{u}(t)\ell(t)-L^{\bar u}(t)\bar\ell(t),q(t)\rangle_g\right)dt\right].
\end{array}
$$
We introduce the following 'Hamiltonian' function:
\begin{equation}\label{mp-hamiltonian}
\hH(t,y,\bar y,u,z):=y\langle\ell(t,\bar y,u),z\rangle_{g}-yf(t,\bar y,u).
\end{equation}
Furthermore, for $u$ and $\bar{u}$ in $\U$, we set 
\begin{equation}\label{mp-hamiltonian-1} \left\{ \begin{array}{lll}
\hH(t):=L^{u}(t)\left(\langle\ell(t,E[L^{u}(t)\k(x(t))],u(t)),q(t)\rangle_{g}-f(t,E[L^{u}(t)\k_{f}(x(t))],u(t))\right), \\

\bar{\hH}(t):=L^{\bar u}(t)\left(\langle\ell(t,E[L^{\bar u}(t)\k(x(t))],\bar{u}(t)),q(t)\rangle_{g}-f(t,E[L^{\bar u}(t)\k_{f}(x(t))],\bar{u}(t))\right).
\end{array}
\right.
\end{equation}

Since $(y,\bar y,u)\mapsto y\ell(\cdot,\bar y,u)$ and $-yf(\cdot,\bar y,u)$ are concave,  we have
$$
\hH(t)-\bar \hH(t)\le \bar \hH_{y}(t)(L^{u}(t)- L^{\bar u}(t))+\bar \hH_{\bar y}(t)(E[\k(x(t))(L^{u}(t)-L^{\bar u}(t))])+\bar \hH_u(t)\cdot(u(t)-\bar u(t)).
$$
Since, by \eqref{H-max}, $\bar{\hH}_u(t)=H_u(t,\bar{u}(t))=0 \,\,\, \ae t\in[0,T]$, we obtain
\begin{equation*}\label{5-concave}\begin{array}{ll}
E[\hH(t)-\bar\hH(t)]\le E\left[\left\{\langle \bar\ell(t),q(t)\rangle_{g}- \bar f(t)+\k(x(t))E[L^{\bar u}(t)(\langle\bar\ell_{\bar y}(t),q(t)\rangle_{g}]
\right.\right. \\ \left. \left. \qquad\qquad\qquad\qquad\qquad\qquad  -\k_{f}(x(t))E[L^{\bar u}(t)\bar f_{\bar y}(t)\right\}(L^{u}(t)-L^{\bar u}(t))\right]
\end{array}
\end{equation*}
for $\ae t\in[0,T]$.

Therefore, 
\begin{equation*}\label{h-convex}
E[L^u(T)h(T)-L^{\bar u}(T)\bar{h}(T)]\ge E\left[\int_0^T \left(\hH(t)-\bar \hH(t)-\langle L^{u}(t)\ell(t)-L^{\bar u}(t)\bar\ell(t),q(t)\rangle_g\right)dt\right].
\end{equation*}
Hence,
$$\begin{array}{lll}
J(u)-J(\bar u)\ge E\left[\int_0^T \left(\hH(t)-\bar \hH(t)+L^{u}(t)f(t)-L^{\bar u}(t)\bar f(t) \right.\right. \\ \left.\left. \qquad\qquad\qquad\qquad\qquad\qquad\qquad\qquad -\langle L^{u}(t)\ell(t)-L^{\bar u}(t)\bar\ell(t),q(t)\rangle_g\right)dt\right]=0.
\end{array}
$$
\end{proof}

\begin{theorem}[Necessary optimality conditions (Verification Theorem)]\label{necessary-mp}
If $(L^{\bar{u}}, \bar{u})$ is an optimal pair of the control problem  \eqref{opt-J} and 
there is a unique pair of $\mathcal{F}$-adapted processes $(p, q)$, associated to $(L^{\bar{u}}, \bar{u})$, which satisfies \eqref{mp-pq-adjoint}-\eqref{mp-pq-estim-1}, then 
\begin{equation*}
H(t,\bar u(t))=\max_{v\in U} H(t,v), \qquad \ae t\in[0,T],\quad  P\text{-}\as
\end{equation*}
\end{theorem}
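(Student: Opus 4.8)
The plan is to combine a convex perturbation of the candidate optimal control with a duality identity between the linearised density equation and the adjoint BSDE \eqref{mp-pq-adjoint}. Fix an arbitrary $v\in\U$ and, for $\theta\in(0,1]$, set $u^\theta:=\bar u+\theta(v-\bar u)$; since $U$ is convex (the convex-body and $u$-differentiability structure of (A4) is in force here), $u^\theta\in\U$. Optimality of $\bar u$ gives $J(u^\theta)\ge J(\bar u)$, so once differentiability of $\theta\mapsto J(u^\theta)$ is established, dividing by $\theta>0$ and letting $\theta\downarrow 0$ yields the one-sided inequality $\frac{d}{d\theta}J(u^\theta)\big|_{\theta=0^+}\ge 0$. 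The whole proof reduces to computing this derivative and recasting it through the Hamiltonian \eqref{ham-u}. First I would introduce the variational process $Z(t):=\frac{d}{d\theta}L^{u^\theta}(t)\big|_{\theta=0^+}$ and show, by differentiating \eqref{L-u-1} in $\theta$, that it solves the linear equation
\begin{equation*}
\begin{split}
dZ(t)=\sum_{i,j:\,i\neq j}I_i(t^-)\Big(&Z(t^-)\bar\ell_{ij}(t)\\
&+L^{\bar u}(t^-)\big(\bar\ell_{\bar y,ij}(t)\,E[\k(x(t))Z(t)]+\bar\ell_{u,ij}(t)\cdot(v(t)-\bar u(t))\big)\Big)\,dM_{ij}(t),
\end{split}
\end{equation*}
with $Z(0)=0$, where the mean-field dependence produces the coupling $E[\k(x(t))Z(t)]$. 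Existence, uniqueness and the $L^2$-estimate for $Z$, as well as the $L^2$-differentiability of $\theta\mapsto L^{u^\theta}$ with derivative $Z$, follow from the boundedness of $\|\bar\ell\|_g,\|\bar\ell_{\bar y}\|_g$ in (A1), the integrability of $L^{\bar u}$, and the same Gronwall/fixed-point estimate used for \eqref{L-u-1}.

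Next I would differentiate the recast cost \eqref{J-u-mp-1}. The running and terminal terms split into three families: a term $Z(t)\bar f(t)$ (and $Z(T)\bar h(T)$) from differentiating the multiplicative factor $L^{u}$, the mean-field terms $L^{\bar u}(t)\bar f_{\bar y}(t)E[\k_f(x(t))Z(t)]$ (and the analogous $h$-term at $T$), and the explicit control term $L^{\bar u}(t)\bar f_u(t)\cdot(v(t)-\bar u(t))$. Since the inner expectations are deterministic, a Fubini rearrangement rewrites each mean-field contribution, e.g. $E[L^{\bar u}(t)\bar f_{\bar y}(t)]\cdot E[\k_f(x(t))Z(t)]=E[\,Z(t)\k_f(x(t))\,E[L^{\bar u}(t)\bar f_{\bar y}(t)]\,]$, so that it matches exactly the corresponding mean-field term in the driver and terminal value of \eqref{mp-pq-adjoint}.

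The heart of the argument is the duality step: apply the integration-by-parts (It\^o) formula for purely discontinuous semimartingales to $p(t)Z(t)$. Inserting the dynamics of $p$ from \eqref{mp-pq-adjoint} and of $Z$ above, the two martingale integrals vanish in expectation, leaving only the drift of $p$ paired with $Z$ and the bracket $[Z,p]$, which is evaluated through the predictable quadratic variations \eqref{mart-2} (turning the $dM_{ij}$-coefficients of $Z$ paired with $q$ into the $g$-weighted inner products $\langle\,\cdot\,,q\rangle_g$). By the very design of the adjoint drift, all terms linear in $Z$ cancel, and the two mean-field $\ell_{\bar y}$-terms cancel against each other by the same Fubini identity used above; using $Z(0)=0$ and the terminal condition $p(T)=-\bar h(T)-\k_h(x(T))E[L^{\bar u}(T)\bar h_{\bar y}(T)]$, the boundary value $-E[p(T)Z(T)]$ accounts precisely for the $h$-part of the derivative. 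Substituting back and recalling $H_u(t,\bar u(t))=L^{\bar u}(t)\big(\langle\bar\ell_u(t),q(t)\rangle_g-\bar f_u(t)\big)$, the derivative collapses to
\begin{equation*}
0\le \frac{d}{d\theta}J(u^\theta)\Big|_{\theta=0^+}=-E\Big[\int_0^T H_u(t,\bar u(t))\cdot(v(t)-\bar u(t))\,dt\Big],
\end{equation*}
so that $E\big[\int_0^T H_u(t,\bar u(t))\cdot(v(t)-\bar u(t))\,dt\big]\le 0$ for every $v\in\U$.

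Finally, since $v$ is arbitrary in $\U$, the standard localisation argument — taking $v(t)=\bar u(t)+\mathbf 1_A(t)\,(w-\bar u(t))$ for a fixed $w\in U$ and an arbitrary predictable set $A$, which is admissible by convexity of $U$ — yields the pointwise first-order condition $H_u(t,\bar u(t))\cdot(w-\bar u(t))\le 0$ for all $w\in U$, $\ae t\in[0,T]$, $P$-$\as$ (a countable dense $w$ and continuity upgrade to all $w$ simultaneously). Because $U$ is a convex body and $\bar u(t)\in U$, this variational inequality is exactly the stationarity condition characterising $\bar u(t)$ as a maximiser of $v\mapsto H(t,v)$ over $U$ whenever $H(t,\cdot)$ is concave (which is the situation covered by the concavity hypotheses of Theorem \ref{sufficient-mp}), giving \eqref{H-max}. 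I expect the main obstacle to be the rigorous justification of the variational expansion: because $L^u$ depends on the control multiplicatively through the martingale SDE \eqref{L-u-1}, one must secure uniform moment bounds for $L^{u^\theta}$ and $Z$ under only the boundedness hypotheses (A1)--(A2) and control the jump structure so that $\theta\mapsto L^{u^\theta}$ is genuinely $L^2$-differentiable; the secondary delicate point is the mean-field Fubini cancellation in the duality step, where one must carefully track that the $E[\k Z]$-contributions produced by the perturbation coincide with the mean-field terms built into the adjoint equation.
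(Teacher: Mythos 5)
Note first that the paper never proves Theorem~\ref{necessary-mp}: it explicitly defers to \cite{Buckdahn2,Shen,Tang}, so your proposal can only be measured against that standard route. Your architecture --- convex perturbation $u^{\theta}=\bar u+\theta(v-\bar u)$, first-order variational process $Z$ obtained by differentiating \eqref{L-u-1}, duality between $Z$ and the adjoint pair $(p,q)$ of \eqref{mp-pq-adjoint} via integration by parts for purely discontinuous semimartingales, Fubini rearrangement of the mean-field terms, and localisation over predictable sets --- is the standard convex-domain SMP argument (closest to \cite{Shen}; \cite{Tang} and \cite{Buckdahn2} instead use spike variations with second-order adjoints precisely because they do not assume a convex control domain). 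Up to the variational inequality $H_u(t,\bar u(t))\cdot(w-\bar u(t))\le 0$ for all $w\in U$, a.e.\ $t$, $P$-a.s., the plan is sound, modulo the integrability bookkeeping you flag yourself.

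The genuine gap is the last step. The theorem asserts the \emph{global} condition $H(t,\bar u(t))=\max_{v\in U}H(t,v)$ under only (A1), (A2), optimality of $(L^{\bar u},\bar u)$ and existence/uniqueness of $(p,q)$; your argument yields only the first-order (stationarity) condition, and you upgrade it to a global maximum by invoking concavity of $v\mapsto H(t,v)$, i.e.\ by importing the hypotheses (A4)--(A6) of Theorem~\ref{sufficient-mp}, which are not hypotheses of Theorem~\ref{necessary-mp}. This is not a cosmetic issue: by Remark~\ref{feasibility}, the entire purpose of the necessary-conditions theorem is to cover cases where those concavity assumptions fail (e.g.\ $\ell$ and $f$ linear in $u$), and concavity of $H(t,\cdot)=L^{\bar u}(t)\bigl(\langle\ell(t,\cdot,v),q(t)\rangle_g-f(t,\cdot,v)\bigr)$ is not implied by (A1)--(A2) --- it would require sign information on $q$, which fails already in Example~1, where $q_{ab}=h(a)-h(b)\le 0$. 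To obtain the stated global form one needs either a spike-variation expansion in the spirit of the cited references (delicate here, since the control enters the martingale integrand of \eqref{L-u-1}, so the perturbation of $L^{u}$ is only of order $\sqrt{\epsilon}$ in $L^{2}$ and the second-order terms do not vanish, the cost being nonlinear in $L^{u}$ through the mean-field coupling), or an argument exploiting the weak formulation directly (a martingale-optimality-principle argument under the fixed reference measure $P$, where changing $u$ changes only drifts). Neither is in your proposal, so what you prove is the local maximum principle, a strictly weaker statement than the theorem as written.
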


\begin{remark}\label{feasibility}
Unfortunately, the sufficient optimality conditions are almost rare to obtain, due to the fact that the convexity conditions imposed on the involved coefficients are not always satisfied, even for the simplest examples: assume $\ell$ and $f$ without mean-field coupling and linear in the control $u$. Then none of the functions $(y,u)\mapsto y\ell(\cdot,u)$ and $(y,u)\mapsto -yf(\cdot,u)$ is concave in $(y,u)$. However, the verification theorem in terms of necessarily optimality conditions holds for a fairly general class of functions with sufficient smoothness. Hence, if we can solve the associated BSDEs, the necessary optimality conditions result can be useful.
\end{remark}

\section{Examples } In this section we first  solve the adjoint equation associated to an optimal control problem associated with a standard two-state Markov chain, then we extend the problem to a two-state Markov chain of man-field type. As mentioned in Remark \eqref{feasibility}, whether sufficient or necessary conditions may apply depends of course on the smoothness of the involved functions. Not all the functions involved in the next examples satisfy the convexity conditions imposed in Theorem \eqref{sufficient-mp}.

\subsection*{Example 1. Optimal control of a standard two-state Markov chain}
We study the optimal control of a simple Markov chain $ x$ whose state space is $ \mathcal {X}=\{a,b\}$, where $(0\le a<b)$ are integers,  and its jump intensity matrix  is
\[
   \lambda^u(t)=
  \left[ {\begin{array}{cc}
   -\a & \a \\
   u(t) & -u(t) \\
  \end{array} } \right],
\]

\no where $\a $ is a given positive constant intensity and $u$ is the control process we assume  nonnegative, bounded and predictable. 
Let $P$ the probability measure under which the chain $x$  has intensity matrix 
\[
   G=
  \left[ {\begin{array}{cc}
   -g_{ab} & g_{ab} \\
   g_{ba} & -g_{ba} \\
  \end{array} } \right], \quad g_{ab},\,\,g_{ba} > 0. 
\]
Further, let $L^u(t)=\frac{dP^u}{dP}\big|_{\F_t}$  be the density process given by \eqref{L-u-1}, where  $\ell$ is defined by 
\begin{equation*}\label{L-u-2-mp}
\ell^{u}_{ij}(t)=\left\{\begin{array}{rl}
\l^{u}_{ij}(t)/g_{ij}-1 &\textbf{if }\,\, i\neq j,\\ 0 & \textbf{if }\,\, i=j.
\end{array}
\right.
\end{equation*}

The control problem we want to solve consists of finding the optimal control $\bar{u}$ that minimizes the linear-quadratic cost functional
\begin{equation}\label{cost-no-mean}
J(u)=E^u\left[\frac{1}{2}\int_0^T u^2(t)dt+h(x(T))\right], \quad h(b) \ge h(a).
\end{equation}

\no Given a control $v\in U$, consider the Hamiltonian 
$$ H(t,L^{\bar{u}}(t),q(t),v):= L^{\bar{u}}(t) ( \langle \ell^{v}(t) ,q(t)\rangle_{g} -\frac{1}{2}v^2 )=:H(t,v),$$
where 
$$
\langle \ell^{v}(t) ,q(t)\rangle_{g} =q_{ab}(t) (\a-g_{ab}) I_a(t^-)+q_{ba}(t) (v-g_{ba}) I_b(t^-).$$ 

\no By the first order optimality conditions, an optimal control $\bar u$ is solution of  the equation $\frac{\partial H(t,v)}{\partial v}=0$, which implies
\begin{align*}
0 = \left\langle  \frac{\partial \ell^v}{\partial v}(t),q(t)  \right\rangle_g-v =q_{ba}(t)I_b(t^-)-v. 
  \end{align*}
The optimal control is thus
\begin{equation}\label{ex-1-opt-u}
\bar{u}(t)=q_{ba}(t)I_b(t^-).
\end{equation}
where, for each $t$, $q_{ba} (t)\geq 0 $, since  $\bar{u} (t)\geq 0$. \\
It remains to identify $q_{ba}(t)$. Consider the associated adjoint equations given by
\begin{equation*}
\left\{ \begin{array}{lll}
dp(t) =-\left\{ \langle \ell^{\bar{u}}(t),q(t)  \rangle_g-\frac{1}{2}\bar{u}^2(t)  \right\} dt +q_{ab}(t)dM_{ab}(t)+q_{ba}(t)dM_{ba}(t) ,\quad 0\le t<T,\\ \\ 
p(T)= -h(x(T)).
\end{array}
\right.
\end{equation*}
In view of \eqref{ex-1-opt-u}, the driver reads
 \begin{equation}\label{driver}
 \begin{array}{lll}
\langle \ell^{\bar{u}}(t), q(t) \rangle_g-\frac{1}{2}\bar{u}^2(t) =q_{ab}(t) I_a(t^-)(\a-g_{ab}) +q_{ba}(t) I_b(t^-) \left\{ \frac{1}{2} q_{ba}(t) I_b(t^-)- g_{ba}  \right\}. 
\end{array}
\end{equation}
The adjoint equation becomes 
$$
dp(t)= q_{ab}(t) \left\{ -(\a-g_{ab}) I_a(t^-) dt+ dM_{ab}(t)\right\}+ q_{ba}(t) \left\{ -(\frac{1}{2}q_{ba}(t)-g_{ba}) I_b(t^-) dt+ dM_{ba}(t)\right\}.
$$ 
Now, considering the probability measure $\widetilde{P}$ under which  $x$ is a Markov chain whose jump intensity matrix
\[
  \widetilde{G}(t)=
  \left[ {\begin{array}{cc}
   - \a & \a \\
  \frac{1}{2} q_{ba}(t) & -\frac{1}{2} q_{ab}(t) \\
  \end{array} } \right],
  \] 
 the processes defined by 
\begin{equation*}
\left\{ \begin{array}{lll}
d\widetilde{M}_{ab}(t)=dM_{ab}(t)- (\a-g_{ab})I_a(t^-) dt, \\ 
d\widetilde{M}_{ba}(t)=dM_{ba}(t)- (\frac{1}{2}q_{ba}(t)-g_{ba}) I_b(t^-) dt,
\end{array}
\right.
\end{equation*}
are $\widetilde{P}$-martingales having the same jumps as the martingales $M_{ij}$:
\begin{equation}\label{ex-1-M-jump}
\Delta\widetilde{M}_{ab}(t)=\Delta M_{ab}(t)=I_a(t^-)I_b(t),\,\, \Delta\widetilde{M}_{ba}(t)=\Delta M_{ba}(t)=I_b(t^-)I_a(t)
\end{equation}
and  
\begin{equation}\label{ex-1-p}
dp(t)= q_{ab}(t) d\widetilde{M}_{ab}(t)+ q_{ba}(t) d\widetilde{M}_{ba}(t).
\end{equation}
This yields 
\begin{equation}\label{ex-1-delta-p}
\Delta p(t)=q_{ab}(t)I_a(t^-)I_b(t)+q_{ba}(t)I_b(t^-)I_a(t).
\end{equation}
Integrating \eqref{ex-1-p} and then taking conditional expectation yields
$$
p(t) = -\widetilde{E} [h(x(T))| \mathcal{F}_t ].
$$
Therefore, 
\begin{equation} \label{jump}
\Delta p(t)= -\Delta\widetilde{E} [h(x(T))| \mathcal{F}_t ].
\end{equation}
Under the probability measure $\widetilde{P} $
\begin{align*}
h(x(T))&=h(x(t))+ \int_t^T \left\{ \a (h(b)-h(a)) I_a(s^-)+ \frac{1}{2}q_{ba}(s) (h(a)-h(b))  I_b(s^-)\right\} ds \\ &+ \int_t^T   (h(b)-h(a))d\widetilde{M}_{ab} (s) +   \int_t^T  (h(a)-h(b))d\widetilde{M}_{ba}(s).
\end{align*}

\noindent Taking conditional expectation, we obtain
\begin{align*}
\widetilde{E} [h(x(T))| \mathcal{F}_t ] = h(x(t))+  \int_t^T \widetilde{E} \left[ \a(h(b)-h(a)) I_a(s^-)+ \frac{1}{2}q_{ba}(s) (h(a)-h(b))  I_b(s^-) | \mathcal{F}_t \right] ds,
\end{align*}
and 
\begin{align*}
\Delta \widetilde{E} [h(x(T))| \mathcal{F}_t ] = \Delta h(x(t))= -(h(b)-h(a)) I_a(t^-) I_b(t)-(h(a)-h(b)) I_b(t^-) I_a(t),
\end{align*}
which in view of  \eqref{jump} implies that
\begin{equation}\label{ex-1-q}
q_{ab}(t) =h(a)-h(b), \,\,\ q_{ba}(t) =h(b)-h(a).
\end{equation}
Therefore,
\begin{align*}
\bar{u}(t) = (h(b)-h(a)) I_b(t^-)= h(b) I_b(t^-)-h(a)+h(a)I_a(t^-)=h(x(t^-))-h(a),
\end{align*}
which yields the following explicit form of the optimal control:
$$
\bar{u}(t)=h(x(t^-))-h(a).
$$

\ms
In the next two examples we highlight  the effect of the mean-field coupling in both the jump intensity and the cost functional on the optimal control.

\subsection*{Example 2. Mean-field optimal control of a two-state Markov chain}   We consider the same chain as in the first example but with the following mean-field type jump intensities, $(t\in [0,T])$,

\[
   \lambda^u(t)=
  \left[ {\begin{array}{cc}
   -\a & \a \\
   u(t)+E^u[x(t^-)] & -u(t)-E^u[x(t^-)] \\
  \end{array} } \right],\quad \a>0,\quad u(t)+E^u[x(t^-)]\ge 0.
\]

and want to minimize the cost functional 

\begin{equation}\label{cost-mean}
J(u)=E^u\left[\frac{1}{2}\int_0^T u^2(t)dt\right]+Var^u(x(T)),
\end{equation}
where $Var^u(x(T))$ denotes the variance of $x(T)$ under the probability $P^u$ defined by
$$
Var^u(x(T)):=E^u\left[\left(x(T)-E^u[x(T)]\right)^2\right].
$$
\no Given a control $v\in U$, consider the Hamiltonian 
$$ H(t,v):= L^{\bar{u}}(t) ( \langle \ell^{v}(t) ,q(t)\rangle_{g} -\frac{1}{2}v^2 ),$$
where 
$$\langle \ell^{v}(t) ,q(t)\rangle_{g} =q_{ab}(t) (\a-g_{ab}) I_a(t^-)+q_{ba}(t) (v+E^{\bar{u}}[x(t^-)]-g_{ba}) I_b(t^-).$$ 

\no Performing similar calculations as in Example 1, we find that the optimal control is given by 
\begin{equation}\label{ex-2-opt-u}
\bar{u}(t)=q_{ba}(t) I_b(t^-).
\end{equation}

\no We will now identify $q_{ba}$. The associated adjoint equation is given by 

\begin{equation*}
\left\{ \begin{array}{lll}
dp(t)=-\left\{ \langle \ell^{\bar{u}}(t),q(t)  \rangle_g-\frac{1}{2}\bar{u}^2(t) +x(t) E^{\bar{u}}[\bar{H}_{\bar{y}}(t)]\right\} dt +q_{ab}(t)dM_{ab}(t)+q_{ba}(t)dM_{ba}(t) ,\\ \\ 
p(T)= -\left(x(T)-E^{\bar{u}}[x(T)]\right)^2.
\end{array}
\right.
\end{equation*}
In view of \eqref{ex-2-opt-u}, the driver reads
\begin{equation*}\begin{array}{lll}
\langle \ell^{\bar{u}}(t),q(t)  \rangle_g-\frac{1}{2}\bar{u}^2(t)+x(t) E^{\bar{u}}[\bar{H}_{\bar{y}}(t)] = 
q_{ba}(t) \left(\frac{1}{2} q_{ba}(t)+E^{\bar{u}}[x(t^-)]-g_{ba} \right) I_b(t^-) \\ \qquad\qquad\qquad\qquad\qquad\qquad\qquad  
+q_{ab}(t) (\a-g_{ab}) I_a(t^-)+x(t) E^{\bar{u}}[q_{ba}(t)I_b(t^-)]
\end{array}
\end{equation*}
The adjoint equation becomes
\begin{equation}\begin{array}{lll}
dp(t)=q_{ab}(t) \left\{dM_{ab}(t)  -(\a-g_{ab}) I_a(t^-)dt\right\}-x(t) E^{\bar{u}}[q_{ba}(t)I_b(t^-) ] dt\\ \qquad\qquad\qquad\qquad +q_{ba}(t)\left\{ dM_{ba}(t)- \left( \frac{1}{2}q_{ba}(t)+E^{\bar{u}}[x(t^-)]-g_{ba} \right) I_b(t^-) dt \right\}.
\end{array}
\end{equation}

\no Consider the probability measure $\widetilde P $, under which $x$ is a Markov chain whose jump intensity matrix

\[
   \widetilde G (t)=
  \left[ {\begin{array}{cc}
   -\a & \a \\
   \frac{1}{2} q_{ba}(t)+E^{\bar{u}}[x(t^-)] & -\frac{1}{2} q_{ba}(t)-E^{\bar{u}}[x(t^-)] \\
  \end{array} } \right], \quad  \frac{1}{2} q_{ba}(t)+E^{\bar{u}}[x(t^-)] \geq 0.
\]

\no This change of measure yields the $\widetilde P-$martingales

\begin{equation*}
\left\{ \begin{array}{lll}
d\widetilde{M}_{ab}(t)=dM_{ab}(t)- (\a-g_{ab})I_a(t^-) dt, \\ 
d\widetilde{M}_{ba}(t)=dM_{ba}(t)- (\frac{1}{2}q_{ba}(t)+E^{\bar{u}}[x(t^-)] -g_{ba}) I_b(t^-) dt
\end{array}
\right.
\end{equation*}

\no and 
\begin{equation} \label{adjoint-ex2}
dp(t)=- x(t) E^{\bar{u}}[q_{ba}(t)I_b(t^-) ] dt+ q_{ab}(t)d\widetilde{M}_{ab}(t)+ q_{ba}(t) d\widetilde{M}_{ba}(t).
\end{equation}

\no This yields 
\begin{equation} \label{delta p-ex02}
\Delta p(t)  = q_{ab} (t) I_a(t^-) I_b(t)+ q_{ba}(t) I_b(t^-)I_a(t).
\end{equation}

\no Integrating \eqref{adjoint-ex2}, then taking conditional expectation yields

$$
p(t) = - \widetilde{E} [ \left(x(T)-E^{\bar{u}}[x(T)]\right)^2 | \mathcal{F}_t ] + \widetilde{E} [ \int_t^T x(s) E^{\bar{u}}[q_{ba}(t)I_b(s^-) ] ds    | \mathcal{F}_t ]  
$$

\no Therefore,
\begin{align} \label{delta-phi}
\Delta p(t)= -\Delta \widetilde{E} [\left(x(T)-E^{\bar{u}}[x(T)]\right)^2| \mathcal{F}_t ].
\end{align}
\no Next, we compute the right hand side of \eqref{delta-phi}, then we identify $q_{ba}$ by matching. 
\\ Set $\bar{\mu}(t):= E^{\bar{u}} [x(t)]$and $ \phi(t,x(t)):=  (x(t)-\bar{\mu}(t))^2. $ Under $\widetilde P,$ Dynkin's formula yields
$$\phi(T,x(T))= \phi(t,x(t))+ \int_t^T \left( \frac{\partial \phi}{\partial s} +\widetilde G \phi\right)(s,x(s)) ds + \widetilde M_T^{\phi}- \widetilde M_t^{\phi} . $$

\no Taking conditional expectation yields
$$ \widetilde E [\phi(T,x(T))|\mathcal{F}_t ] = \phi(t,x(t))+ \int_t^T  \widetilde E \left[  \left( \frac{\partial \phi}{\partial s} +\widetilde G \phi\right)(s,x(s)) \given[\Big] \mathcal{F}_t \right] ds, $$

\no and
\begin{align*}
\Delta \widetilde E [\phi(T,x(T))|\mathcal{F}_t ]  = \Delta \phi (t,x(t)),
\end{align*}
 \no where
 \begin{align*}
 \Delta \phi (t,x(t)) &= \underset{i,j : \ i \neq j }\sum \left(\phi(t,j)-\phi(t,i) \right) I_i(t^-)I_j(t)
  \quad \quad (i,j \ \in \{a,b\}) \\        
                     &= \left( (b^2-a^2)-2\bar{\mu}(t)(b-a) \right) I_a(t^-) I_b(t)+ \left((a^2-b^2)-2\bar{\mu}(t)(a-b)\right) I_b(t^-) I_a(t).
 \end{align*}
Therefore, 
\begin{align}
\Delta p(t) &= - \Delta \widetilde{E} [\left(x(T)-\bar{\mu}(T) \right)^2| \mathcal{F}_t ] \nonumber \\
            &= \left( (a^2-b^2)+2\bar{\mu}(t)(b-a) \right) I_a(t^-) I_b(t)+ \left((b^2-a^2)+2\bar{\mu}(t)(a-b)\right) I_b(t^-) I_a(t).\label{matching-ex2}
\end{align}

\no  Matching \eqref{delta p-ex02} with \eqref{matching-ex2} yields

\begin{equation*}
\left\{ \begin{array}{lll}
q_{ab}(t)  = (a^2-b^2)+2\bar{\mu}(t)(b-a),   \\ 
q_{ba}(t)  = (b^2-a^2)+2\bar{\mu}(t)(a-b).
\end{array}
\right.
\end{equation*}

\no Hence, 
$$
\bar{u}(t) =\left( (b^2-a^2)+2\bar{\mu}(t)(a-b) \right) I_b(t^-). 
$$

Noting that $a\le \bar\mu(t)\le b$, to  guarantee that both $\lambda^{\bar u}(t)$ and $\widetilde G(t)$ above are indeed intensity matrices, it suffices to impose that
\begin{equation}\label{u-opt-admi}
0\le \bar\mu(t)\le \frac{1}{2}(a+b).
\end{equation} 
                           
\no We further characterize the optimal control $\bar{u}(t)$ by finding $\bar\mu(t)$ which satisfies \eqref{u-opt-admi}. Indeed, under $P^{\bar{u}}$, $x$ has the representation
\begin{align*}
x(t) =& x(0) + \int_0^t \left\{ \a (b-a) I_a(s^-) + \bar{\mu}(s) (a-b) I_b(s^-) + \bar{u}(s) (a-b) I_b(s^-) \right\} ds \\
     &+ \int_0^t(b-a) dM^{\bar{u}}_{ba}(s) + \int_0^t (a-b) dM^{\bar{u}}_{ab}(s).
\end{align*}
Taking the expectation under $P^{\bar{u}} $ yields
 \begin{equation} \label{mu}
 \bar{\mu}(t)  = \bar{\mu} (0) + E^{\bar{u}} [\int_0^t \left\{  \a (b-a) I_a(s^-)+ \bar{\mu}(s) (a-b) I_b(s^-) + (a-b) \bar{u}(s) I_b(s^-) \right\} ds ].
 \end{equation}
In particular, the mapping $ t \rightarrow \mu(t) $ is absolutely continuous.            
\no Using the fact that $(a-b) I_b(t^-) = a-x(t^-) $ and $(b-a) I_a(t^-) = b-x(t^-)$, equation \eqref{mu}becomes 
\begin{align*}
\bar{\mu}(t) =& \bar{\mu}(0)+\int_0^t \left\{ \a (b-\bar{\mu}(s)) + \bar{\mu}(s)(a-\bar{\mu}(s)) \right\}ds + \int_0^t \left\{ E^{\bar{u}} [(a-b) \bar{u}(s) I_b(s^-) ] \right\} ds  \\
  =& \bar{\mu}(0) + \int_0^t \left\{ \a (b-\bar{\mu}(s)) + \bar{\mu}(s)(a-\bar{\mu}(s)) \right\}ds \label{last term-mu} \\
  &+ \int_0^t \left\{ E^{\bar{u}} \left[(a-b) I_b(s^-) \left( (b^2-a^2) +2 \bar{\mu}(s) (a-b) \right) \right] \right\}ds \\
  =&\bar{\mu}(0)+\int_0^t \left\{ \a (b-\bar{\mu}(s)) + \bar{\mu}(s)(a-\bar{\mu}(s)) \right\}ds \\
  &+ \int_0^t \left\{ (b^2-a^2) (a-\bar{\mu}(s)) + 2 (a-b) \bar{\mu}(s) (a-\bar{\mu}(s)) \right\} ds                                                                                \\
  =&\bar{\mu}(0)+\int_0^t \left\{ \left( \a b +a(b^2-a^2) \right)+ \left(2(b-a)-1\right) \bar{\mu}^2(s) + (3a^2+a(1-2b)-b^2) \bar{\mu}(s) \right\}ds,                                                                       
\end{align*}
\no with
\begin{equation*}
\left\{ \begin{array}{lll}
A  := 2(b-a)-1,   \\ 
B  := 3a^2+a(1-2b)-b^2,  \\
C  := \a b +a(b^2-a^2).
\end{array}
\right.
\end{equation*}
Thus, in view \eqref{u-opt-admi}, $\bar{\mu}$ should satisfy the following constrained Riccati equation  
\begin{equation}\label{ex-2-riccati}
\left\{ \begin{array}{lll}
\dot{\bar\mu}(t) = A {\bar\mu}^2(t) + B \bar{\mu}(t) + C,   \\ 
\bar\mu(0) = m_0,\\
0\le {\bar\mu}(t)\le \frac{1}{2}(a+b),
\end{array}
\right.
\end{equation}
where $m_0$ is a given initial value.   As it is well known, without the imposed constraint on $\bar\mu$, the Riccati equation admits an explicit solution that may explode in finite time unless the involved coefficients $a,b,\a$ and $m_0$ evolve within certain ranges. With the imposed constraint on $\bar\mu$, these ranges may become further tighter. Below we illustrate this through a few cases.  
As shown in the tables below, for low values of $\a$, the ODE \eqref{ex-2-riccati} can be solved for any time. How low the intensity should be mainly depends on the size of  $b$ and $b-a$, the larger is $b$ the wider is the range for $\a$ for which the ODE is solvable. In particular, when $a=0$ and $b=1$, \eqref{ex-2-riccati} is solvable for any time when $\alpha =0.1,0.2 $. For greater values of $\alpha $ the ODE violates the constraint proportionally "faster".    

The results also show that the initial conditions may affects the time horizon $T$. Starting with values reasonably close to   $\frac{a+b}{2}$  the ODE \eqref{ex-2-riccati}  is solvable only for relatively shorter time horizons than when we start with values reasonably close to zero. 
\\
\begin{center}
    \begin{tabular}{l*{5}{c}r}
    \hline
     a & b & $\alpha$ & $T_{m_0=0}$ & $ T_{m_0=0.25}$
     \\  \hline
     0 & 1 & 0.1  & .&.&								
     \\ \hline
     0 & 1   & 0.2 & .&.&
     \\ \hline
     0 & 1   & 0.3  & 5.145&3.762
    \\ \hline
     0 & 1   & 0.4 & 2.355&1.481
     \\ \hline
     0 & 1   & 0.5 & 1.571&0.928
    \\ \hline
     0 & 1   & 0.6 & 1.870&0.676
     \\ \hline
     0 & 1   & 0.7 & 0.955&0.532
     \\ \hline
     0 & 1   & 0.8 & 0.800&0.439
     \\ \hline
     0 & 1   & 0.9 & 0.689&0.373
     \\ \hline
     0 & 1   & 1   & 0.605&0.325
     \\ \hline
     0 & 1   & 5   & 0.104&0.053	
     \\ \hline
     0 & 1   & 10  & 0.051&0.026
    \end{tabular}
\quad
\begin{tabular}{l*{5}{c}r}
    \hline
     a & b & $\alpha$ &$ T_{m_0=0.25}$ & $T_{m_0=1}$
     \\  \hline
     1 & 2 & 0.1  & .&.&
     \\ \hline
     1 & 2 & 0.2 & .&.&
     \\ \hline
     1 & 2  & 0.3 & .&.&
    \\ \hline
     1 & 2   & 0.4  &2.644&2.153							
     \\ \hline
     1 & 2   & 0.5 &1.429& 1.001
    \\ \hline
     1 & 2   & 0.6 &1.073& 0.692
     \\ \hline
     1 & 2   & 0.7 &0.878& 0.535
     \\ \hline
     1 & 2   & 0.8 &0.750& 0.438
     \\ \hline
     1 & 2   & 0.9 &0.659& 0.371
     \\ \hline
     1 & 2   & 1 &0.589&  0.322
     \\ \hline
     1 & 2   & 5 &0.121&  0.053
     \\ \hline
     1 & 2   & 10 &0.062& 0.026
    \end{tabular}
    \end{center}
    
    \begin{center}
\begin{tabular}{l*{5}{c}r}
    \hline
     a & b & $\alpha$ & $ T_{m_0=0.25}$ & $T_{m_0=2}$ 
     \\  \hline
     2 & 3 & 0.1  & .&.&
     \\ \hline
     2 & 3 & 0.2 & .&.&
     \\ \hline
     2 & 3  & 0.3 & .&.& 
    \\ \hline
     2 & 3   & 0.4  &.&.
     \\ \hline
     2 & 3   & 0.5 &1.206& 0.761							
    \\ \hline
     2 & 3   & 0.6 & 0.899&0.494
     \\ \hline
     2 & 3   & 0.7 &0.746 &0.373
     \\ \hline
     2 & 3   & 0.8 &0.648& 0.302
     \\ \hline
     2 & 3   & 0.9 &0.578& 0.254
     \\ \hline
     2 & 3   & 1 &0.524&  0.220
     \\ \hline
     2 & 3   & 5 &0.131&  0.035
     \\ \hline
     2 & 3   & 10 &0.070& 0.018
    \end{tabular}
    \quad
\begin{tabular}{l*{5}{c}r}
    \hline
     a & b & $\alpha$ & $T_{m_0=0}$ & $ T_{m_0=0.75}$    
     \\  \hline
     0 & 2 & 0.1  & .&.&
     \\ \hline
     0 & 2 & 0.2 &  .&.&
     \\ \hline
     0 & 2  & 0.3 & .&.&
    \\ \hline
     0 & 2   & 0.4 & .&.&
     \\ \hline
     0 & 2   & 0.5 & .&.&
    \\ \hline
     0 & 2   & 0.6 & .&.&
     \\ \hline
     0 & 2   & 0.7 & 5.593&1.433			
     \\ \hline
     0 & 2   & 0.8 & 2.227&0.636
     \\ \hline
     0 & 2   & 0.9 & 1.470&0.418
     \\ \hline
     0 & 2   & 1 &  1.111&0.312
     \\ \hline
     0 & 2   & 5 &  0.112&0.029
     \\ \hline
     0 & 2   & 10 & 0.053&0.014
    \end{tabular}
    \end{center}
    
    \begin{center}
    \begin{tabular}{l*{5}{c}r}
    \hline
     a & b & $\alpha$ & $T_{m_0=0}$ & $ T_{m_0=1}$ 
     \\  \hline
     0 & 3 & 0.1  & .&.&
     \\ \hline
     0 & 3 & 0.2 &  .&.&
     \\ \hline
     0 & 3  & 0.3 & .&.&
    \\ \hline
     0 & 3   & 0.4 & .&.&
     \\ \hline
     0 & 3   & 0.5 & .&.&
    \\ \hline
     0 & 3   & 0.6 & .&.&
     \\ \hline
     0 & 3   & 0.7 & .&.&
     \\ \hline
     0 & 3   & 0.8 & .&.&
     \\ \hline
     0 & 3   & 0.9 & .&.&
     \\ \hline
     0 & 3   & 1 &  .&.&
     \\ \hline
     0 & 3   & 5 &  0.126&0.043	
     \\ \hline
     0 & 3   & 10 & 0.056&0.019
    \end{tabular}
  \end{center}


\subsection*{Example 3. Mean-field Schl\"ogl model}
\no We suggest to solve a control problem  associated with a mean-field  version of the Schl\"ogl model (cf. \cite{NP}, \cite{Chen}, \cite{DZ} and \cite{FZ}) where the intensities are of the form 
\begin{equation}\label{Sc-lambda}
 \l_{ij}^u(t,x,u(t)):=\left\{\begin{array}{ll} \nu_{ij}(t) & \text{if} \,\, j\neq i-1,\\
u(t) + \b E^{u}[x(t)]  & \text{if} \,\, j= i-1,
 \end{array}
 \right.
 \end{equation}
for some predictable and positive control process $u$,  where $ \b> 0 $ and
 $(\alpha_{ij})_{ij}$ is a deterministic $Q$-matrix for which there exists $N_0\ge 1$ such
that $\alpha_{ij}= 0$ for $|j-i|\ge N_0$ and $\alpha_{ij}> 0$ for $|j-i|< N_0$.

\no We consider the following mean field-type cost functional 

\begin{equation}\label{Sc-opt-u}
J(u)= E^u\left[ \int_0^T  \frac{1}{2}u^2(t)dt+x(T) \right]. 
\end{equation}

\no Given a control $v>0$, the associated Hamiltonian reads
$$ 
H(t,v):= L^{\bar{u}} \left( \underset{ i,j,j\neq i, i-1  }\sum \{ I_i(t^-)(\alpha_{ij}-g_{ij}) q_{ij}(t) \}+ \underset{ i }\sum \{ I_i(t^-)(v+\beta_1E^{\bar{u}}[x(t)]-g_{ii-1}) q_{ii-1}(t) \}-
 \frac{v^2}{2} \right). 
$$

\no The first-order optimality conditions yield 
\begin{equation}\label{Sc-opt-u}
\bar{u}(t)= \underset{ i }\sum I_i(t^-)q_{ii-1} (t)
\end{equation}

\no Next, we write the associated  adjoint equation and identify $q_{ii-1}.$
\begin{equation*}
\left\{ \begin{array}{lll}
dp(t)= -\underset{i}\sum q_{ii-1}(t)\left\{ I_i(t^-)(\frac{1}{2}q_{ii-1}(t) +\b E^{\bar{u}}[x(t)]-g_{ii-1})dt -dM_{ii-1}(t) \right\} \\ \qquad\qquad\qquad
      -\underset{ i j,j\neq i, i-1 }\sum q_{ij}(t) \left\{I_i(t^-)(\nu_{ij}-g_{ij}) dt - dM_{ij}(t)\right\} -\beta x(t)E^{\bar{u}}\left[\underset{i}\sum q_{ii-1}(t)I_i(t^-)\right]dt,  
\\
p(T)= - x(T) .
\end{array}
\right.
\end{equation*}

\no Consider the probability measure $\widetilde P $, under which $x$ is a pure jump process whose jump intensity matrix is

\begin{equation*}
\widetilde G_{ij}(t) =\left\{ \begin{array}{lll}
\alpha_{ij} & \text{if} \,\, j\neq i-1,
\\
\frac{1}{2} q_{ij}(t) + \b E^{\bar{u}}[x(t)]  & \text{if} \,\, j= i-1.
\end{array}
\right.
\end{equation*}

The adjoint equations becomes
\begin{equation*}
\left\{ \begin{array}{lll}
dp(t)=-\beta x(t) E^{\bar{u}}\underset{i}\sum [q_{ii-1}(t)I_i(t^-)] dt+ \underset{ i }\sum q_{ii-1}(t) d\widetilde M_{ii-1}(t)+ \underset{ i\neq j,j\neq i-1  }\sum q_{ij}(t) d\widetilde M_{ij}(t),  
\\
p(T)= - x(T),
\end{array}
\right.
\end{equation*}
where $\widetilde M_{ij},\,\, i\neq j$ are mutually orthogonal $\widetilde P $-martingales.

Thus
$$
p(t) = - \widetilde{E} [x(T)| \mathcal{F}_t ] + \b \int_t^T \widetilde{E} \left[ x(s) E^{\bar{u}}\left[\underset{i}\sum q_{ii-1}I_i(s^-)\right]\big| \mathcal{F}_t \right]ds,
$$
and
\begin{equation} \label{matching-schlogl}
\Delta p(t)  = \Delta \widetilde{E} [-x(T)| \mathcal{F}_t ].
\end{equation}

Following the same steps leading to \eqref{ex-1-q}, from \ref{matching-schlogl} we obtain $q_{ij}(t)=i-j$, thus $q_{ii-1}(t)=1,\,, i=1,2,\ldots $ 

Therefore, 
$$
\bar{u}(t)= \underset{ i\ge 1 }\sum I_i(t^-)=1-I_0(t^-).
$$

\begin{bibdiv}
\begin{biblist}

\bib{BV}{article}{
title={Optimal control of jump processes},
  author={Boel, R.},
  author={Varaiya, P.},
  journal={SIAM Journal on Control and Optimization},
  volume={15},
  number={1},
  pages={92--119},
  year={1977},
  publisher={SIAM}
}
\bib{Bis}{article}{
title={Control of jump processes and applications},
  author={Bismut, Jean-Michel},
  journal={Bull. Soc. Math. France},
  volume={106},
  number={1},
  pages={25--60},
  year={1978}
}

\bib{bremaud}{book}{
    author={Br\`emaud, Pierre},
   title={Point Processes and Queues: Martingale Dynamics},
  year={1981},
  publisher={Springer-Verlag, Berlin},

}	
\bib{Buckdahn}{article}{
   author={Buckdahn, Rainer},
   author={Li, Juan},
   author={Peng, Shige},
   title={Mean-Field Backward Stochastic Differential Equations and Related Partial Differential Equations},
   journal={Stochastic Processes and their Applications},
  volume={119},
  number={10},
  pages={3133--3154},
  }	
\bib{Buckdahn2}{article}{
   author={Buckdahn, Rainer},
   author={Djehiche, Boualem},
   author={Li, Juan},
   title={A general stochastic maximum principle for SDEs of mean-field type},
   journal={Applied Mathematics \& Optimization},
  volume={64},
  number={2},
  pages={197--216},
}
\bib{Cohen1}{article}{
   author={Cohen, S.N.},
   author={Elliott, R.J.},
   title={Existence, Uniqueness and Comparisons for BSDEs in General Spaces},
   journal={Annals of Probability},
   volume={40(5)},
   date={2012},
   pages={ 2264--2297},
}	
\bib{Cohen2}{article}{
   author={Cohen, S.N.},
   author={Elliott, R.J.},
   title={Stochastic calculus and applications},
  volume={2nd edition},
  year={2015},
  publisher={Springer},
}
\bib{Chen}{book}{
    author={Chen, Mu Fa},
   title={From Markov chains to non-equilibrium particle systems},
  year={2004},
  publisher={World Scientific}
}

\bib{choutri2016}{article}{
   author={Choutri, Salah Eddine},
   author={Djehiche, Boualem},
   author={Tembine, Hamidou},
   title={Optimal control and zero-sum games for Markov chains of mean-field type},
   journal={Preprint, arXiv:1606.04244 [math.OC]},
   date={2016},
}   
\bib{DE}{article}{
title={Optimal control of a jump process},
  author={Davis, Mark},
  author={Elliott, Robert},
  journal={Zeitschrift f{\"u}r Wahrscheinlichkeitstheorie und Verwandte Gebiete},
  volume={40},
  number={3},
  pages={183--202},
  year={1977},
  publisher={Springer}
}

\bib{DZ}{article}{
  title={Law of large numbers and central limit theorem for unbounded jump mean-field models},
  author={Dawson, Donald},
  author={Zheng, Xiaogu},
  journal={Advances in Applied Mathematics},
  volume={12},
  number={3},
  pages={293--326},
  year={1991},
  publisher={Elsevier}

}
\bib{EK}{book}{
    author={Ethier, Stewart N.},
    author={ Kurtz, Thomas G.},
   title={Markov processes: characterization and convergence},
  volume={282},
  year={2009},
  publisher={John Wiley \& Sons},

}
\bib{FZ}{article}{
  title={Solutions of a class of nonlinear master equations},
  author={Feng, Shui},
  author={Zheng, Xiaogu},
  journal={Stochastic processes and their applications},
  volume={43},
  number={1},
  pages={65--84},
  year={1992},
  publisher={Elsevier}
}
\bib{McKean}{article}{
 title={A class of Markov processes associated with nonlinear parabolic equations},
  author={McKean, Henry P},
  journal={Proceedings of the National Academy of Sciences},
  volume={56},
  number={6},
  pages={1907--1911},
  year={1966},
  publisher={National Acad Sciences}
}

\bib{NP}{book}{ 
 author={Nicolis, Gr{\'e}goire},
author={Prigogine, Ilya},
   title={Self organization in non-equilibrium systems},
  year={1977},
  publisher={Wiley-Interscience, New York},	
  
}
  \bib{RW}{book}{
    author={Rogers, L Chris G},
    author={Williams, David},
        title={Diffusions, Markov Processes and Martingales-Volume 2: It\^o Calculus.},
  year={2000},
  publisher={Cambridge University Press},
}   
  \bib{Shen}{article}{ 
   title={The maximum principle for a jump-diffusion mean-field model and its application to the mean--variance problem},
  author={Shen, Yang},
  author={ Siu, Tak Kuen},
  journal={Nonlinear Analysis: Theory, Methods \& Applications},
  volume={86},
  pages={58--73},
  year={2013},
  publisher={Elsevier}

}
\bib{Tang}{article}{
title={Necessary conditions for optimal control of stochastic systems with random jumps},
  author={Tang, Shanjian},
  author={Li, Xunjing},
  journal={SIAM Journal on Control and Optimization},
  volume={32},
  number={5},
  pages={1447--1475},
  year={1994},
  publisher={SIAM}

}
\bib{WD}{article}{
title={Existence of optimal controls for stochastic jump processes},
  author={Wan, CB},
  author={Davis, MHA},
  journal={SIAM Journal on Control and Optimization},
  volume={17},
  number={4},
  pages={511--524},
  year={1979},
  publisher={SIAM}
}
\end{biblist}
\end{bibdiv}

\end{document}